\theoremstyle{plain} 
\newtheorem{lemma}{Lemma}
\newtheorem{proposition}[lemma]{Proposition}
\newtheorem{corollary}[lemma]{Corollary}
\newtheorem*{results*}{Results}
\theoremstyle{definition}
\newtheorem*{question*}{Question}
\newtheorem{definition*}{Definition}
\newcommand{\rr}{\cellcolor{red}H}
\newcommand{\pp}{\cellcolor{gray}2}
\newcommand{\nn}{\cellcolor{green}1}
\newcommand{\vv}{\cellcolor{blue}V}
\newcommand{\nr}{\cellcolor{pink}1H}
\newcommand{\xl}{\cellcolor{pink}-V}
\newcommand{\xr}{\cellcolor{SkyBlue}-H}
\newcommand{\nl}{\cellcolor{SkyBlue}1V}
\newcommand{\np}{\cellcolor{yellow}12}
\newcommand{\group}{\mathcal{G}}
\title{An update on domineering on rectangular boards}
\author{Gabriel C. Drummond-Cole}
\thanks{This material is based in part upon work supported by the National Science Foundation under Award No. DMS-1004625.}
\begin{document}
\begin{abstract}
Domineering is a combinatorial game played on a subset of a rectangular grid between two players. Each board position can be put into one of four outcome classes based on who the winner will be if both players play optimally. In this note, we review previous work, establish the outcome classes for several dimensions of rectangular board, and restrict the outcome class in several more.
\end{abstract}
\maketitle
\section{Introduction}
Domineering, invented by G\"oran Andersson and introduced to the public by Martin Gardner~\cite{Gardner:MG}, is a game played on a rectangular grid of squares between two players. The players take turns placing dominoes on unoccupied squares of the board. Each domino covers two adjacent squares. One player, called Vertical, must place her dominoes in a ``vertical'' orientation. The other, called Horizontal, places hers in a ``horizontal'' orientation. When a player has no legal move on her turn, she loses. We shall refer to the board with vertical dimension $m$ and horizontal dimension $n$ as the $m\times n$ board or as $G_{m,n}$. 

Because this is a finite drawless turn-based perfect information game of no chance, each board position has a particular outcome class which describes the winner if neither player makes a mistake. This outcome class could be $V$ if the vertical player will win, $H$ if the horizontal player will win, $1$ if the next player to move will win, regardless of whether it is vertical or horizontal, and $2$ if the next player to move will lose. 
 
Combinatorial game theory goes further, giving each board position a value in a partially ordered Abelian group $\group$~\cite{BerlekampConwayGuy:WWFYMP}. Disjoint union of board positions corresponds to addition in the group. Outcome classes can be read off from values in the group: the outcome class of a board position is $V$ if and only if the board position is greater than $0$, is $H$ if and only if the board position is less than $0$, is $1$ if and only if the board position is incomparable to $0$, and is $2$ if and only if the board position is equal to $0$.

Domineering has been studied both by mathematicians working in combinatorial game theory and by computer scientists working in artificial intelligence. Typically the computer scientists have been concerned exclusively or primarily with outcome classes while the mathematicians have been interested in outcome classes along with other questions about the $\group$ values it takes.

Berlekamp engaged in the first systematic research into the outcome classes of rectangular boards~\cite{Berlekamp:BD}, giving precise $\group$ values for the boards $G_{2,2k+1}$.

Breuker, Uiterwijk, and van den Herik~\cite{BreukerUiterwijkvandenHerik:SEED} and Uiterwijk and van den Herik~\cite{UiterwijkvandenHerik:AI} used a computer program called DOMI that employed $\alpha$-$\beta$ pruning to determine the outcome classes of several boards. Their most recent publicly available results are in~\cite{Maastricht:W}.\footnote{They employ the convention that $V$ moves first so their $1$ is the same as our $1V$ and their $2$ the same as our $2H$. Using symmetry about the diagonal, outcome classes for the sizes they analyze can be recovered from their table.}

Lachmann, Moore, and Rapaport~\cite{LachmannMooreRapaport:WWDRB} extended this work by means of several simple rules which allowed them to combine outcome classes of smaller boards to give outcome classes for larger boards.

Bullock~\cite{Bullock:DSLCSS} wrote a computer program called Obsequi that employed $\alpha$-$\beta$ pruning to determine outcome classes of Domineering positions. This program had a number of performance enhancements over DOMI and was able to determine outcome classes for larger boards.

This paper uses a mixture of methods. We use Obsequi to analyze the disjoint union of rectangular positions with small non-rectangular positions of known $\group$-value to establish bounds in $\group$ for rectangular positions. We also employ precise $\group$ values for boards of the form $G_{2,n}$ for certain $n$, calculated with Berlekamp's formula or Siegel's cgsuite software~\cite{Siegel:C}. We investigate the implications of the methods of Lachmann et al. and apply these methods to the calculations made with Obsequi and cgsuite. We also improve one of their methods. All of this allows the following previously unpublished results:
\begin{results*}\ 
\begin{enumerate}
\item The outcome class of $G_{6,n}$ is either $1$ or $H$ for $n>29$,
\item the outcome class of $G_{8,n}$ is $H$ for $n\in\{26,30,36,40,42,46,48,50,52\}$ and all even $n>54$,
\item the outcome class of $G_{8,n}$ is either $1$ or $H$ for $n\in\{28,34,38,44,54\}$,
\item the outcome class of $G_{9,n}$ is $H$ for $n\in \{13,15,17,19,21\}$,
\item the outcome class of $G_{11,n}$ is $H$ for $n\in\{14,18\}$ and for odd $n$ greater than $31$,
\item the outcome class of $G_{15,n}$ is either $1$ or $H$ for $n\in\{6,10,14,18\}$, 
\item the outcome class of $G_{19,6}$ is $1$, and
\item the outcome class of $G_{n,2kn}$ is $H$ for all $n$ and $k$.
\end{enumerate}
\end{results*}
\section{Results on individual boards}
Lachmann et al. use a number of tools to combine results for smaller boards into results for larger boards. The simplified versions of their theorems that we will use are the following. We will use the notation $|G_{m,n}|$ to denote the $\group$ value of $G_{m,n}$.
\begin{proposition}[The one-hand-tied principle for rectangular boards]\label{prop:onehandtied}
\[|G_{m,n_1+n_2}|\le |G_{m,n_1}|+|G_{m,n_2}|.\]
The applications are as follows: if the outcome class of $G_{m,n_1}$ is $H$ and the the outcome class of $G_{m,n_2}$ is $1$ (respectively $2$ or $H$) then the outcome class of $G_{m,n_1+n_2}$ is either $1$ or $H$ (respectively $H$).
\end{proposition}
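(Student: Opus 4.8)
The plan is to prove the stated $\group$-inequality and then extract the four outcome-class implications from the standard dictionary (a position has outcome $H$ precisely when its $\group$-value is $<0$, outcome $1$ precisely when the value is incomparable to $0$, and outcome $2$ precisely when the value is $0$). The inequality is an instance of the ``one hand tied'' principle: in $G_{m,n_1+n_2}$, draw the vertical line $\ell$ that separates the first $n_1$ columns from the last $n_2$. A vertical domino can never cross $\ell$, but a horizontal one can; so if Horizontal ties one hand behind her back and refuses ever to place a domino straddling $\ell$, then no move of either player couples the two sides and the position she is now playing is literally the disjoint sum $G_{m,n_1}+G_{m,n_2}$. What I would actually prove is the general lemma that any Domineering position $P$ cut by a vertical line into a left half $P'$ and a right half $P''$ satisfies $P\le P'+P''$ (with Vertical in the role of Left, consistent with the convention that positive $\group$-values favor Vertical); applying this to $P=G_{m,n_1+n_2}$ with the cut between columns $n_1$ and $n_1+1$ gives $|G_{m,n_1+n_2}|\le|G_{m,n_1}|+|G_{m,n_2}|$.

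This lemma is the heart of the matter and the step I expect to be the main obstacle, because one cannot shortcut it: the one-hand-tied game is \emph{not} a subposition of $P$, since after a straddling move one reaches boards that never occur in $P'+P''$, so there is no ready-made monotonicity principle to quote. I would prove it by induction on the number of empty squares. By the recursive definition of the partial order, $P\le P'+P''$ amounts to verifying that no Vertical option of $P$ is $\ge P'+P''$ and that no Horizontal option of $P'+P''$ is $\le P$. Any Vertical move in $P$ lies wholly on one side of $\ell$, so the resulting position $Q$ splits into halves $Q'$ and $Q''$ whose sum is a Vertical option of $P'+P''$; if $Q\ge P'+P''$ then, since $Q\le Q'+Q''$ by the inductive hypothesis, that option of $P'+P''$ would itself be $\ge P'+P''$, contradicting the elementary fact that no game is $\le$ one of its Left options. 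Any Horizontal option of $P'+P''$ arises from a horizontal domino lying within one half, hence from a non-straddling move that is equally legal in $P$; performing it in $P$ produces a Horizontal option $Q$ of $P$ whose two halves sum to the given option, so if that option were $\le P$ the inductive hypothesis would give $Q\le P$, contradicting the dual fact that no game is $\ge$ one of its Right options. The base case, a board with no empty squares, reads $0\le 0$.

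The applications then follow by a short computation in $\group$. If $|G_{m,n_2}|\le 0$ then, since $|G_{m,n_1}|<0$, we get $|G_{m,n_1}|+|G_{m,n_2}|<0$, hence $|G_{m,n_1+n_2}|<0$ and $G_{m,n_1+n_2}$ has outcome $H$; this handles the ``$2$ or $H$'' cases. If instead $|G_{m,n_2}|$ is incomparable to $0$, then $|G_{m,n_1}|<0$ forces $|G_{m,n_1}|+|G_{m,n_2}|<|G_{m,n_2}|$, which rules out $|G_{m,n_1}|+|G_{m,n_2}|\ge 0$; therefore $|G_{m,n_1+n_2}|\not\ge 0$, so the outcome of $G_{m,n_1+n_2}$ is $1$ or $H$.
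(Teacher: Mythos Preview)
Your proof is correct. The paper itself does not supply a detailed proof of this proposition---it is quoted as a result of Lachmann, Moore, and Rapaport---but in the proof of the later proposition on $G_{n,2n}$ the paper gives the one-line strategy justification: ``the horizontal player can only hurt her outcome class by refusing to move across the red line.'' Unpacked, that is the semantic version of your argument: every move available to Horizontal in $P'+P''$ is also legal in $P$, and every move available to Vertical in $P$ is also legal in $P'+P''$, so any second- or first-player winning strategy Horizontal has in the sum transfers verbatim to $P$. Your induction on the number of empty squares via the recursive characterization of $\le$ (together with the consequence of reflexivity that no $G^L\ge G$ and no $G^R\le G$) is the formal game-tree counterpart of exactly this strategy-stealing idea; the two approaches are equivalent and both standard. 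Your derivation of the outcome-class consequences from the inequality is likewise correct and a bit more explicit than anything the paper spells out.
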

\begin{proposition}\label{prop:2m}
The outcome class of $G_{m,2km}$ is $2$ or $H$.
\end{proposition}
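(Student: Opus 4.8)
The plan is to exhibit an explicit pairing strategy showing that Vertical, moving first on $G_{m,2km}$, loses; since ``Vertical loses moving first'' is precisely the condition $|G_{m,2km}|\le 0$, and the positions of nonpositive $\group$-value are exactly those of outcome class $2$ or $H$, this gives the proposition.

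First I would slice the board (rows $1,\dots,m$, columns $1,\dots,2km$) into $2k$ consecutive vertical strips $C_1,\dots,C_{2k}$, each an $m\times m$ square, and pair them off as $(C_1,C_2),(C_3,C_4),\dots,(C_{2k-1},C_{2k})$. On each pair I would set up a quarter-turn bijection between the two $m\times m$ squares --- in coordinates local to the pair, something like $(r,c)\mapsto(c,\,m+1-r)$ --- chosen so that it interchanges the relation ``two cells adjacent in a column'' with ``two cells adjacent in a row.'' The point of this choice is that such a map, together with its inverse, sends every vertical domino in one square of the pair to a horizontal domino in the other square, and vice versa. A vertical domino occupies two cells of a single column, so it lies entirely inside one strip $C_j$ and hence inside one pair; Horizontal's strategy is then to reply to each of Vertical's moves by playing the image of that domino, under the relevant quarter-turn or its inverse, in the partner strip.

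To see that this is a legal and inexhaustible response, I would maintain the invariant that, at the start of each of Vertical's turns, within every pair the occupied cells of one square are exactly the quarter-turn image of the occupied cells of the other. This holds vacuously on the empty board; if it holds when Vertical plays a vertical domino $d$ inside some pair, then the target cells of Horizontal's reply lie in the partner strip (so they are disjoint from $d$) and are unoccupied (by the invariant applied to $d$'s cells), so the reply is legal, and playing it restores the invariant. Consequently Horizontal can always answer, the game can only end on one of Vertical's turns, and --- Vertical having moved first --- Vertical loses. Equivalently, one can prove only the case $k=1$ this way and then obtain the general case from Proposition~\ref{prop:onehandtied}, since $|G_{m,2km}|\le k\,|G_{m,2m}|\le 0$.

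I expect the only delicate point to be the verification that the quarter-turn map really does exchange vertical and horizontal dominoes in both directions and that the symmetry invariant survives a full Vertical-then-Horizontal exchange --- in particular, that a vertical domino cannot straddle a strip boundary and that Horizontal's reply never collides with the domino Vertical has just placed; the rest is bookkeeping, and no $\group$-value computation is needed beyond the remark that outcome classes $2$ and $H$ together comprise exactly the games $G$ with $G\le 0$.
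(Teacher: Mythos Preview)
Your proposal is correct and is essentially the same argument the paper attributes to Lachmann, Moore, and Rapaport (and recounts inside the proof of Proposition~\ref{prop:better2m}): split the board into $m\times m$ squares, let Horizontal refuse to cross the seams (the one-hand-tied principle), and answer each Vertical move by its quarter-turn image in the partner square, so that Horizontal playing second always has a reply. The only cosmetic difference is that the paper states the mirror strategy for a single pair and then invokes Proposition~\ref{prop:onehandtied} to pass from $k=1$ to general $k$, whereas you carry out the pairing for all $2k$ squares simultaneously; as you note yourself, these are equivalent.
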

We shall improve this result with Proposition~\ref{prop:better2m}, but the version of Lachmann et al. suffices for the results of this section.

We can directly combine these with Bullock's results that the outcome class of $G_{6,14}$ and $G_{8,10}$ are $H$ to obtain the following proposition, which can be viewed as an application of the general principle of Proposition~\ref{prop:gcd}.
\begin{proposition}\ 
\begin{enumerate}
\item The outcome class of $G_{6,n}$ is either $1$ or $H$ for $n>29$ and
\item the outcome class of $G_{8,n}$ is $H$ for $n\in\{26,30,32,36,40,42,46,48,50,52\}$ and all even $n>54$, and is either $1$ or $H$ for $n\in\{28,34,38,44,54\}$.
\end{enumerate}
\end{proposition}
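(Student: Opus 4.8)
The plan is to derive both parts by iterating the one-hand-tied principle (Proposition~\ref{prop:onehandtied}) from a small stock of base boards. The raw material is: the two outcomes $|G_{6,14}|$ and $|G_{8,10}|$, which Bullock computed to be $H$; the outcomes of all smaller rectangular boards, which are recorded in the earlier computational literature; and the families $G_{6,12k}$ and $G_{8,16k}$, which Proposition~\ref{prop:2m} guarantees have outcome $2$ or $H$. All that Proposition~\ref{prop:onehandtied} really gives us is an arithmetic of outcomes under horizontal concatenation: if $G_{m,n_1}$ has outcome $H$ and $G_{m,n_2}$ has outcome in $\{1,2,H\}$ then $G_{m,n_1+n_2}$ has outcome in $\{1,H\}$; if in addition $G_{m,n_2}$ has outcome in $\{2,H\}$ then $G_{m,n_1+n_2}$ has outcome $H$; and, more generally, a $2$-or-$H$ board concatenated with a $1$-or-$H$ board is again $1$-or-$H$. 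So it suffices, for each target $n$, to write $n=n_1+\dots+n_r$ with at least one summand an $H$-board, all summands $\{1,2,H\}$-boards (to conclude $1$-or-$H$), and all but one a $\{2,H\}$-board (to conclude $H$).

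Carrying this out, I would take as ``$H$-generators'' the widths $14$ (for $m=6$) and $10$ (for $m=8$) together with every smaller width for which the literature records a $\{1,2,H\}$-outcome, and as ``$2$-or-$H$ generators'' the widths $12k$ and $16k$ from Proposition~\ref{prop:2m}. The substance then becomes a Frobenius-type computation: the blocks $\{12,14\}$ (respectively $\{10,16\}$) already generate a cofinite set of even widths, and appending further $12k$- (respectively $16k$-) blocks to an existing decomposition propagates the conclusion to every larger width in the same residue class; so one is left to check only finitely many residues and small exceptional values directly. This is exactly why ``$n>29$'' and ``even $n>54$'' are the natural thresholds, and for each exceptional value listed one picks a bespoke partition --- typically splitting off a $G_{m,14}$ or $G_{m,10}$ summand and matching the remainder to a tabulated board, or gluing together two known $H$-boards (for $m=6$ one also needs odd-width blocks here, since $\{12,14\}$ alone reaches only even widths). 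This is the organizing viewpoint recorded as Proposition~\ref{prop:gcd}.

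I expect the difficulty to be entirely in the bookkeeping rather than in any single idea. The point to watch is that outcome $2$ is \emph{leaky}: a concatenation of two $2$-or-$H$ boards can again be $2$, so every decomposition must retain at least one genuinely-$H$ (or genuinely-$1$) summand, which limits the legal partitions and makes the argument depend delicately on exactly which small boards the literature has resolved. The hardest individual cases are the widths that fall into no cheap semigroup --- for $m=8$ the multiples of $16$ such as $32$ and $48$, which Proposition~\ref{prop:2m} only constrains to $2$ or $H$ --- where forcing the outcome $H$ requires locating the right auxiliary decomposition into known $H$-boards. In the end the proof is a guided finite case check: fix the generating sets, read off the Frobenius thresholds, and dispatch the leftover widths one at a time.
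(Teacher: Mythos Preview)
Your plan is the paper's plan: iterate the one-hand-tied principle from a finite stock of base boards and reduce to a Frobenius-type finite check. Two points where the paper's execution differs from your sketch are worth noting. For part~(1) the paper does not build on $\{12,14\}$ but rather on the single block $G_{6,8}$, which is already known to have outcome $H$; this reduces immediately to the eight residues modulo $8$, all of which are settled by tabulated boards of width at most $28$ except residue~$5$, handled by the ad hoc split $37=14+12+11$. Your $\{12,14\}$-plus-odd-blocks route would also work, but using the width-$8$ block is tighter and avoids the parity detour.

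For part~(2) there is a genuine, if small, gap in your outline. You invoke only Proposition~\ref{prop:2m}, which gives $G_{8,16k}$ outcome $2$ or $H$, and propose to upgrade the multiples of $16$ to outcome $H$ by auxiliary decompositions. That works for $48$ and $64$ (the paper says so explicitly), but it does \emph{not} seem to work for $32$: every partition of $32$ into widths with known height-$8$ outcome either contains a $V$-board (e.g.\ $10+10+10+2$) or yields only $2$-or-$H$ (e.g.\ $16+16$) or $1$-or-$H$ (e.g.\ $10+10+8+4$ is blocked, and $8+24$ gives no conclusion). The paper sidesteps this by invoking the sharpened Proposition~\ref{prop:better2m}, which gives $G_{8,16}$ outcome $H$ outright, so that $32$, $48$, and $64$ fall immediately as multiples of $16$. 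Your bookkeeping is otherwise sound; you just need the stronger $2m$-result here rather than the Lachmann--Moore--Rapaport version.
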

\begin{proof}
Using the one-hand-tied principle (Proposition~\ref{prop:onehandtied}), since the outcome class of $G_{6,8}$ is $H$, it suffices to prove the proposition for a representative of each residue modulo eight which is less than $38$. These are all already known for a small enough representative except the residue $5$. But by the one-hand-tied principle, since $37=14+12+11$, the outcome class of $G_{8,37}$ is $1$ or $H$.

For the second part, again using the one-hand-tied principle, since the outcome class of $G_{8,10}$ is $H$, it suffices to show the proposition for $G_{8,n}$ for $n\in \{26,28,32,34,48,64\}$. In each case, we will use the one-hand-tied principle, combining $G_{8,10}$ with $G_{8,8}$ (outcome class $1$) and/or $G_{8,16}$ (outcome class $H$ by Proposition~\ref{prop:better2m}).

Since $26=10+16$, $G_{8,26}$ has outcome class $H$. Since $28=2\times 10+8$, $G_{8,28}$ has outcome class $1$ or $H$. Similarly, $34 = 10 + 16 + 8$. The widths $32$, $48$, and $64$ are integer multiples of $16$ which suffices to show that the boards of that width have outcome class $H$. Without using the new result of Proposition~\ref{prop:better2m}, similar but slightly more intricate arguments could still show the result for $G_{8,48}$ and $G_{8,64}$.
\end{proof}
A number of boards can be analyzed by looking at the exact $\group$ values of $G_{2,n}$. The notation and definition of addition in $\group$ can be found in~\cite{BerlekampConwayGuy:WWFYMP}.
\begin{proposition}\label{prop:cgsuite}
\ \begin{enumerate}
\item The outcome classes of $G_{11,14}$ and $G_{11,18}$ are $H$, 
\item the outcome class of $G_{15,n}$ is either $1$ or $H$ for $n\in\{6,10,14,18\}$, and
\item the outcome class of $G_{19,6}$ is $1$.
\end{enumerate}
\end{proposition}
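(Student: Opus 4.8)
The plan is to combine the one-hand-tied principle (Proposition~\ref{prop:onehandtied}) with reflection across the main diagonal and with the explicit $\group$-values of the thin boards $G_{2,n}$. Reflecting a board across its main diagonal interchanges the two players, so $|G_{m,n}|=-|G_{n,m}|$; applying Proposition~\ref{prop:onehandtied} after such a reflection gives the ``stacking'' bound $|G_{m_1+m_2,\,n}|\ge |G_{m_1,n}|+|G_{m_2,n}|$, and in particular $|G_{2k,n}|\ge k\,|G_{2,n}|$ and $|G_{2k+1,n}|\ge k\,|G_{2,n}|+|G_{1,n}|$, where $|G_{1,n}|=-\lfloor n/2\rfloor$ is an integer. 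I would take the values of $|G_{2,n}|$ for odd $n$ from Berlekamp's formula~\cite{Berlekamp:BD} and, for the even widths that occur (namely $n=6$), from cgsuite~\cite{Siegel:C}.

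For parts (1) and (2) a one-sided bound suffices. For $G_{11,14}$ and $G_{11,18}$: reflect and stack the $14=2\cdot 7$, resp.\ $18=2\cdot 9$, rows into height-two strips, obtaining $|G_{14,11}|\ge 7\,|G_{2,11}|$ and $|G_{18,11}|\ge 9\,|G_{2,11}|$, hence $|G_{11,14}|\le -7\,|G_{2,11}|$ and $|G_{11,18}|\le -9\,|G_{2,11}|$. Substituting the value of $|G_{2,11}|$, one checks that $7$ and $9$ copies of it are strictly positive in $\group$, so both boards are strictly negative, i.e.\ of outcome class $H$. For $G_{15,n}$ with $n=2k\in\{6,10,14,18\}$: reflect and stack the $n$ rows into $k$ height-two strips, giving $|G_{n,15}|\ge k\,|G_{2,15}|$ and so $|G_{15,n}|\le -k\,|G_{2,15}|$; substituting the value of $|G_{2,15}|$ one checks that $k$ copies of it is not $\le 0$, so $|G_{15,n}|$ is not $\ge 0$, which excludes outcome classes $V$ and $2$ and leaves exactly $1$ or $H$.

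For part (3) one can run a two-sided squeeze. An upper bound comes from cutting the six columns of $G_{19,6}$ into three width-two strips and reflecting: $|G_{19,6}|\le 3\,|G_{19,2}|=-3\,|G_{2,19}|$. A lower bound comes from stacking the nineteen rows as $2\cdot 9+1$: $|G_{19,6}|\ge 9\,|G_{2,6}|+|G_{1,6}|=9\,|G_{2,6}|-3$. Substituting the values of $|G_{2,19}|$ (Berlekamp) and $|G_{2,6}|$ (cgsuite), one checks that the upper bound is not $\ge 0$ and the lower bound is not $\le 0$; hence $|G_{19,6}|$ is confused with $0$ and the outcome class is $1$.

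The main obstacle is the last step of each case: adding up several copies of the switch-like values $|G_{2,n}|$ and deciding the sum's relation to $0$. These are not ordinary numbers---typically switches, possibly carrying infinitesimal parts---so one must invoke the stopping-value and temperature machinery of~\cite{BerlekampConwayGuy:WWFYMP}. A useful point is that an odd number of copies of a switch has the same temperature as a single copy, so the ``switch part'' does not accumulate; this is why in part (2) the conclusion can only be pushed to ``$1$ or $H$'', whereas in part (1) the number part of $|G_{2,11}|$, once multiplied by $7$ (resp.\ $9$), is large enough to dominate the temperature and force a strict inequality. A secondary, purely bookkeeping, obstacle is keeping the direction of every inequality straight, since reflection reverses signs and stacking rows versus slicing columns produce bounds in opposite directions.
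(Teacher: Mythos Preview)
For parts (1) and (2), and for the upper bound in part (3), your approach is the paper's: slice $G_{m,2k}$ into $k$ copies of $G_{m,2}$ via the one-hand-tied principle and check the sign of $k\,|G_{m,2}|$. The paper simply computes these sums explicitly (e.g.\ $7|G_{11,2}|=\{2|0||-\tfrac12|-2|||-\tfrac52\}<0$) rather than arguing via temperature; note that the values $|G_{m,2}|$ are not pure switches, so your ``odd multiples of a switch'' heuristic does not literally apply, though the direct computations do go through.

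Your lower bound for part (3), however, fails. You assert that $9|G_{2,6}|+|G_{1,6}|=9|G_{2,6}|-3$ is not $\le 0$, but in fact it is strictly negative. One-hand-tied on columns gives $|G_{2,6}|\le 3|G_{2,2}|$, and since $|G_{2,2}|=\{1\,|\,-1\}$ is its own negative we have $2|G_{2,2}|=0$ and hence $3|G_{2,2}|=\{1\,|\,-1\}$. Therefore $9|G_{2,6}|-3\le 9\{1\,|\,-1\}-3=\{1\,|\,-1\}-3=\{-2\,|\,-4\}<0$. So stacking height-two (and height-one) strips cannot rule out outcome class $H$ for $G_{19,6}$. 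The paper gets the missing direction by citing Lachmann, Moore, and Rapaport for the fact that $G_{19,6}$ is $1$ or $V$; their argument uses the decomposition $|G_{6,19}|\le |G_{6,8}|+|G_{6,11}|$ together with the brute-force outcome classes of $G_{6,8}$ (namely $H$) and $G_{6,11}$ (namely $1$). The $V$-or-$1$ direction here genuinely requires information about boards wider than two.
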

\begin{proof}
Using Berlekamp's formula or Siegel's cgsuite software, we can determine that
\[|G_{11,2}|=
\{1|||\frac{1}{2}|-1||-\frac{3}{2}|-\frac{7}{2}\}.\]
Using the one-hand-tied principle, we know that 
\[|G_{11,14}|\le 7|G_{11,2}|=\{2|0||-\frac{1}{2}|-2|||-\frac{5}{2}\}<0 \]
so $G_{11,14}$ has outcome class $H$. The one-hand-tied principle gives us the same for $G_{11,18}$.

Similarly, 
\[|G_{15,2}|=
\{3|\frac{3}{2}||1|-\frac{1}{2}|||-1\}
\]
and it is not hard to check that
\begin{eqnarray*}
3|G_{15,2}|&=&\{\frac{3}{2}|||1|-\frac{1}{2}||-1|-\frac{5}{2}\},\\
5|G_{15,2}|&=&\{\frac{7}{2}|2||\frac{3}{2}|0|||-\frac{1}{2}\},\\
7|G_{15,2}|&=&\{2|||\frac{3}{2}|0||-\frac{1}{2}|-2\}, \text{and}\\
9|G_{15,2}|&=&\{4|\frac{5}{2}||2|\frac{1}{2}|||0\}.
\end{eqnarray*}
are all incomparable with zero, so that the outcome classes of the corresponding boards are either $1$ or $H$.

Finally, 
\[|G_{19,2}|=\{\frac{3}{2}|||1|-\frac{1}{2}||-1|-\frac{5}{2}\}\]
and
\[3|G_{19,2}|=\{4|\frac{5}{2}||2|\frac{1}{2}|||0\}\]
so the outcome class of $G_{19,6}$ is either $1$ or $H$. Lachmann et al. determined that the outcome class of that board was either $1$ or $V$, so it must be $1$.
\end{proof}
The following pair of results are not particularly sharp because they rely on bounds determined by Obsequi, which is optimized for determining outcome classes quickly, not for calculating exact $\group$ values. One can establish bounds on the $\group$ values of Domineering rectangles in terms of games that can be represented by simple Domineering positions. For example, to verify that $|G_{9,7}|\le 1$, we can test whether $|G_{9,7}|+(-1)\le 0$, that is, whether the following game has outcome class either $2$ or $H$ (here gray squares are unplayable):

\begin{center}
\begin{pspicture}(4.5,5)
\psframe(0,0)(.5,.5)
\psframe(0,.5)(.5,1)
\psframe(0,1)(.5,1.5)
\psframe(0,1.5)(.5,2)
\psframe(0,2)(.5,2.5)
\psframe(0,2.5)(.5,3)
\psframe(0,3)(.5,3.5)
\psframe(0,3.5)(.5,4)
\psframe(0,4)(.5,4.5)
\psframe(1,0)(.5,.5)
\psframe(1,.5)(.5,1)
\psframe(1,1)(.5,1.5)
\psframe(1,1.5)(.5,2)
\psframe(1,2)(.5,2.5)
\psframe(1,2.5)(.5,3)
\psframe(1,3)(.5,3.5)
\psframe(1,3.5)(.5,4)
\psframe(1,4)(.5,4.5)
\psframe(1,0)(1.5,.5)
\psframe(1,.5)(1.5,1)
\psframe(1,1)(1.5,1.5)
\psframe(1,1.5)(1.5,2)
\psframe(1,2)(1.5,2.5)
\psframe(1,2.5)(1.5,3)
\psframe(1,3)(1.5,3.5)
\psframe(1,3.5)(1.5,4)
\psframe(1,4)(1.5,4.5)
\psframe(2,0)(1.5,.5)
\psframe(2,.5)(1.5,1)
\psframe(2,1)(1.5,1.5)
\psframe(2,1.5)(1.5,2)
\psframe(2,2)(1.5,2.5)
\psframe(2,2.5)(1.5,3)
\psframe(2,3)(1.5,3.5)
\psframe(2,3.5)(1.5,4)
\psframe(2,4)(1.5,4.5)
\psframe(2,0)(2.5,.5)
\psframe(2,.5)(2.5,1)
\psframe(2,1)(2.5,1.5)
\psframe(2,1.5)(2.5,2)
\psframe(2,2)(2.5,2.5)
\psframe(2,2.5)(2.5,3)
\psframe(2,3)(2.5,3.5)
\psframe(2,3.5)(2.5,4)
\psframe(2,4)(2.5,4.5)
\psframe(3,0)(2.5,.5)
\psframe(3,.5)(2.5,1)
\psframe(3,1)(2.5,1.5)
\psframe(3,1.5)(2.5,2)
\psframe(3,2)(2.5,2.5)
\psframe(3,2.5)(2.5,3)
\psframe(3,3)(2.5,3.5)
\psframe(3,3.5)(2.5,4)
\psframe(3,4)(2.5,4.5)
\psframe(3,0)(3.5,.5)
\psframe(3,.5)(3.5,1)
\psframe(3,1)(3.5,1.5)
\psframe(3,1.5)(3.5,2)
\psframe(3,2)(3.5,2.5)
\psframe(3,2.5)(3.5,3)
\psframe(3,3)(3.5,3.5)
\psframe(3,3.5)(3.5,4)
\psframe(3,4)(3.5,4.5)
\psframe[fillcolor=gray,fillstyle=solid](4,0)(3.5,.5)
\psframe[fillcolor=gray,fillstyle=solid](4,.5)(3.5,1)
\psframe[fillcolor=gray,fillstyle=solid](4,1)(3.5,1.5)
\psframe[fillcolor=gray,fillstyle=solid](4,1.5)(3.5,2)
\psframe[fillcolor=gray,fillstyle=solid](4,2)(3.5,2.5)
\psframe[fillcolor=gray,fillstyle=solid](4,2.5)(3.5,3)
\psframe[fillcolor=gray,fillstyle=solid](4,3)(3.5,3.5)
\psframe[fillcolor=gray,fillstyle=solid](4,3.5)(3.5,4)
\psframe[fillcolor=gray,fillstyle=solid](4,4)(3.5,4.5)
\psframe[fillcolor=gray,fillstyle=solid](4,0)(4.5,.5)
\psframe[fillcolor=gray,fillstyle=solid](4,.5)(4.5,1)
\psframe[fillcolor=gray,fillstyle=solid](4,1)(4.5,1.5)
\psframe[fillcolor=gray,fillstyle=solid](4,1.5)(4.5,2)
\psframe[fillcolor=gray,fillstyle=solid](4,2)(4.5,2.5)
\psframe[fillcolor=gray,fillstyle=solid](4,2.5)(4.5,3)
\psframe[fillcolor=gray,fillstyle=solid](4,3)(4.5,3.5)
\psframe[fillcolor=gray,fillstyle=solid](4,3.5)(4.5,4)
\psframe(4,4)(4.5,4.5)
\psframe[fillcolor=gray,fillstyle=solid](5,0)(4.5,.5)
\psframe[fillcolor=gray,fillstyle=solid](5,.5)(4.5,1)
\psframe[fillcolor=gray,fillstyle=solid](5,1)(4.5,1.5)
\psframe[fillcolor=gray,fillstyle=solid](5,1.5)(4.5,2)
\psframe[fillcolor=gray,fillstyle=solid](5,2)(4.5,2.5)
\psframe[fillcolor=gray,fillstyle=solid](5,2.5)(4.5,3)
\psframe[fillcolor=gray,fillstyle=solid](5,3)(4.5,3.5)
\psframe[fillcolor=gray,fillstyle=solid](5,3.5)(4.5,4)
\psframe(5,4)(4.5,4.5)
\end{pspicture}
\end{center}
This is precisely the sort of problem that Obsequi is equipped to handle, and it verifies that indeed, $|G_{9,7}|\le 1$.

Since this bound and the others for the following proposition are rough, it is possible that similar methods could establish the outcome class of $G_{9,11}$ (the only outstanding board of height 9) and/or boards of height eleven and odd width lower than $33$. These boards are too large for cgsuite to feasibly analyze given current computational resources.
\begin{proposition}
\ \begin{enumerate}
\item The outcome class of $G_{9,n}$ is $H$ for $n\in \{13,15,17,19,21\}$ and
\item the outcome class of $G_{11,n}$ is $H$ for odd $n$ greater than $31$.
\end{enumerate}
\end{proposition}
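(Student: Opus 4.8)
The plan is to push the $\group$-bounding idea already used for $G_{9,7}$ as far as it will go and then feed the output into the one-hand-tied principle. The mechanism is this: to certify an inequality $|G_{m,n}|\le g$ for a short game $g$ that is realized by a small Domineering position, it suffices to hand Obsequi the disjoint union of $G_{m,n}$ with a position of value $-g$ and have it verify that the result has outcome class $2$ or $H$ (equivalently, that Horizontal wins when Vertical moves first); this is exactly the assertion $|G_{m,n}|-g\le 0$. A negative integer value is realized, as in the displayed figure, by a short playable strip of the appropriate orientation padded out by unplayable squares, and a few more squares handle slightly more elaborate $g$. Since Obsequi is built to decide outcome classes quickly, these are computations it can actually carry out at the sizes below, even though the exact $\group$-values are unreachable.

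For part (1) I would not try to obtain the $H$ outcome by combining narrower boards: a $9\times n$ board with $n$ around $7$ is taller than it is wide and leans toward Vertical --- consistent with the bound being $|G_{9,7}|\le 1$ rather than $\le 0$ --- and Proposition~\ref{prop:onehandtied} only ever adds upper bounds. Instead, for each $n\in\{13,15,17,19,21\}$ I would adjoin to $G_{9,n}$ a small position of value $k_n$ for a positive integer $k_n$ and have Obsequi confirm that the union has outcome class $2$ or $H$; this certifies $|G_{9,n}|\le -k_n<0$, hence outcome class $H$. These five boards are comparable in size to ones Obsequi has already resolved, so the only real question is whether $k_n=1$ works in each case or whether a larger, more awkwardly shaped cap is needed.

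For part (2) the widths are infinite in number, so after finitely many computations Proposition~\ref{prop:onehandtied} has to finish the job. The ingredients are: (a) the fact, already established in Proposition~\ref{prop:cgsuite}, that $G_{11,14}$ has outcome class $H$, so $|G_{11,14}|<0$ (optionally supplemented by $|G_{11,18}|<0$ and by $|G_{11,22k}|\le 0$ from Proposition~\ref{prop:2m}); and (b) a block of consecutive odd widths $G_{11,33},G_{11,35},\dots,G_{11,N}$, each certified to have outcome class $H$ by the capped-disjoint-union method of the previous paragraph. As soon as the block is long enough that $33+14\le N+2$ --- roughly seven base boards, reaching into the low forties --- repeatedly adding a width-$14$ piece and applying Proposition~\ref{prop:onehandtied} shows that every odd width exceeding $31$ has outcome class $H$: the set of certified $H$-widths is closed under adding $14$ and contains seven consecutive odd values, so it contains every odd integer above its minimum.

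The obstacle throughout is computational rather than conceptual. In part (1), the danger is that for some $n$ a cap of value $1$ leaves the combined position in outcome class $1$, so that the ``$\le 0$'' test fails and a larger cap --- or a direct determination of the outcome class of $G_{9,n}$ itself --- is needed. In part (2), the bottleneck is the base block: a board such as $G_{11,45}$ has $495$ squares and sits at the edge of what Obsequi can resolve even with the capped-union shortcut, and if it or a neighbor cannot be pushed through, one must shorten the required block --- by finding a narrower odd-width height-$11$ board with outcome class $H$, or by adjoining more even $H$-widths (from Proposition~\ref{prop:2m} or the sharper Proposition~\ref{prop:better2m}) to cover the missing residues. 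Nailing down those base cases is where essentially all the work, and all the risk, lies.
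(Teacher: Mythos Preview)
Your plan has a genuine gap: the Obsequi runs you propose in part~(2) are not at the ``edge'' of feasibility but well past it. Bullock's largest reported rectangles are on the order of $80$--$100$ squares ($G_{8,10}$, $G_{6,14}$, $G_{10,10}$); a capped $G_{11,45}$ is roughly five times that, and you need seven such boards. Your fallback (``shorten the required block'') is not a plan. In part~(1) you likewise ask Obsequi for five boards up to $G_{9,21}$ when a single known outcome---$G_{9,2}$ has outcome class $H$---already reduces the list to the one case $G_{9,13}$, and similarly $G_{11,4}$ being $H$ reduces part~(2) to the two odd bases $G_{11,33}$ and $G_{11,35}$.

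The idea you are missing is the one you explicitly dismiss. You reject ``combining narrower boards'' because the upper bound on $|G_{9,7}|$ is only $1$, not $0$. But the one-hand-tied inequality adds \emph{values}, not just outcome classes, and the exact value $|G_{m,2}|$ is computable by Berlekamp's formula or cgsuite. The paper bounds $|G_{9,13}|\le 3|G_{9,2}|+|G_{9,7}|$: the term $3|G_{9,2}|=\{1|-1\,\|\,-\tfrac{3}{2}|-3\}$ is negative enough that adding the Obsequi-certified bound $|G_{9,7}|\le 1$ still yields something $<0$. For height $11$ the same mechanism works: Obsequi certifies $|G_{11,5}|\le\tfrac{5}{2}$ and $|G_{11,5}|\le\{3|2\}$ on a $55$-square board, and combining with the exact values $14|G_{11,2}|$ and $15|G_{11,2}|$ gives $|G_{11,33}|<0$ and $|G_{11,35}|<0$. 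So the only Obsequi calls are on $G_{9,7}$ and $G_{11,5}$, each an order of magnitude smaller than anything in your plan.
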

\begin{proof}
Since $G_{9,2}$ has outcome class $H$, for the first part it suffices to check $G_{9,13}$. As in Proposition~\ref{prop:cgsuite}, we can verify by cgsuite that $3|G_{9,2}|=\{1|-1||-\frac{3}{2}|-3\}$. Above, we described using Obsequi to verify that $|G_{9,7}|\le 1$. Then by the one-hand-tied principle, $|G_{9,13}|\le 3|G_{9,2}|+|G_{9,7}|\le \{2|0||-\frac{1}{2}|-2\}<0$, so $G_{9,13}$ has outcome class $H$. 

For the second part, it suffices to check for $G_{11,33}$ and $G_{11,35}$. we make two distinct verifications: that $|G_{11,5}|\le \frac{5}{2}$ and that $|G_{11,5}|\le \{3|2\}$. Obsequi can demonstrate these by verifying that $H$ wins if $V$ goes first on the following two positions:

\begin{center}
\begin{pspicture}(11.5,5.5)
\psframe(0,0)(.5,.5)
\psframe(0,.5)(.5,1)
\psframe(0,1)(.5,1.5)
\psframe(0,1.5)(.5,2)
\psframe(0,2)(.5,2.5)
\psframe(0,2.5)(.5,3)
\psframe(0,3)(.5,3.5)
\psframe(0,3.5)(.5,4)
\psframe(0,4)(.5,4.5)
\psframe(0,4.5)(.5,5)
\psframe(0,5)(.5,5.5)
\psframe(1,0)(.5,.5)
\psframe(1,.5)(.5,1)
\psframe(1,1)(.5,1.5)
\psframe(1,1.5)(.5,2)
\psframe(1,2)(.5,2.5)
\psframe(1,2.5)(.5,3)
\psframe(1,3)(.5,3.5)
\psframe(1,3.5)(.5,4)
\psframe(1,4)(.5,4.5)
\psframe(1,4.5)(.5,5)
\psframe(1,5)(.5,5.5)
\psframe(1,0)(1.5,.5)
\psframe(1,.5)(1.5,1)
\psframe(1,1)(1.5,1.5)
\psframe(1,1.5)(1.5,2)
\psframe(1,2)(1.5,2.5)
\psframe(1,2.5)(1.5,3)
\psframe(1,3)(1.5,3.5)
\psframe(1,3.5)(1.5,4)
\psframe(1,4)(1.5,4.5)
\psframe(1,4.5)(1.5,5)
\psframe(1,5)(1.5,5.5)
\psframe(2,0)(1.5,.5)
\psframe(2,.5)(1.5,1)
\psframe(2,1)(1.5,1.5)
\psframe(2,1.5)(1.5,2)
\psframe(2,2)(1.5,2.5)
\psframe(2,2.5)(1.5,3)
\psframe(2,3)(1.5,3.5)
\psframe(2,3.5)(1.5,4)
\psframe(2,4)(1.5,4.5)
\psframe(2,4.5)(1.5,5)
\psframe(2,5)(1.5,5.5)
\psframe(2,0)(2.5,.5)
\psframe(2,.5)(2.5,1)
\psframe(2,1)(2.5,1.5)
\psframe(2,1.5)(2.5,2)
\psframe(2,2)(2.5,2.5)
\psframe(2,2.5)(2.5,3)
\psframe(2,3)(2.5,3.5)
\psframe(2,3.5)(2.5,4)
\psframe(2,4)(2.5,4.5)
\psframe(2,4.5)(2.5,5)
\psframe(2,5)(2.5,5.5)
\psframe(3,0)(2.5,.5)
\psframe(3,.5)(2.5,1)
\psframe(3,1)(2.5,1.5)
\psframe(3,1.5)(2.5,2)
\psframe(3,2)(2.5,2.5)
\psframe(3,2.5)(2.5,3)
\psframe(3,3)(2.5,3.5)
\psframe(3,3.5)(2.5,4)
\psframe(3,4)(2.5,4.5)
\psframe(3,4.5)(2.5,5)
\psframe(3,5)(2.5,5.5)
\psframe[fillcolor=gray, fillstyle=solid](3,0)(3.5,.5)
\psframe[fillcolor=gray, fillstyle=solid](3,.5)(3.5,1)
\psframe[fillcolor=gray, fillstyle=solid](3,1)(3.5,1.5)
\psframe[fillcolor=gray, fillstyle=solid](3,1.5)(3.5,2)
\psframe[fillcolor=gray, fillstyle=solid](3,2)(3.5,2.5)
\psframe[fillcolor=gray, fillstyle=solid](3,2.5)(3.5,3)
\psframe[fillcolor=gray, fillstyle=solid](3,3)(3.5,3.5)
\psframe[fillcolor=gray, fillstyle=solid](3,3.5)(3.5,4)
\psframe[fillcolor=gray, fillstyle=solid](3,4)(3.5,4.5)
\psframe[fillcolor=gray, fillstyle=solid](3,4.5)(3.5,5)
\psframe[fillcolor=gray, fillstyle=solid](3,5)(3.5,5.5)
\psframe(4,0)(3.5,.5)
\psframe[fillcolor=gray, fillstyle=solid](4,.5)(3.5,1)
\psframe[fillcolor=gray, fillstyle=solid](4,1)(3.5,1.5)
\psframe[fillcolor=gray, fillstyle=solid](4,1.5)(3.5,2)
\psframe(4,2)(3.5,2.5)
\psframe[fillcolor=gray, fillstyle=solid](4,2.5)(3.5,3)
\psframe(4,3)(3.5,3.5)
\psframe[fillcolor=gray, fillstyle=solid](4,3.5)(3.5,4)
\psframe[fillcolor=gray, fillstyle=solid](4,4)(3.5,4.5)
\psframe[fillcolor=gray, fillstyle=solid](4,4.5)(3.5,5)
\psframe[fillcolor=gray, fillstyle=solid](4,5)(3.5,5.5)
\psframe(4,0)(4.5,.5)
\psframe[fillcolor=gray, fillstyle=solid](4,.5)(4.5,1)
\psframe[fillcolor=gray, fillstyle=solid](4,1)(4.5,1.5)
\psframe[fillcolor=gray, fillstyle=solid](4,1.5)(4.5,2)
\psframe(4,2)(4.5,2.5)
\psframe[fillcolor=gray, fillstyle=solid](4,2.5)(4.5,3)
\psframe(4,3)(4.5,3.5)
\psframe[fillcolor=gray, fillstyle=solid](4,3.5)(4.5,4)
\psframe[fillcolor=gray, fillstyle=solid](4,4)(4.5,4.5)
\psframe[fillcolor=gray, fillstyle=solid](4,4.5)(4.5,5)
\psframe[fillcolor=gray, fillstyle=solid](4,5)(4.5,5.5)
\psframe(5,0)(4.5,.5)
\psframe(5,.5)(4.5,1)
\psframe[fillcolor=gray, fillstyle=solid](5,1)(4.5,1.5)
\psframe[fillcolor=gray, fillstyle=solid](5,1.5)(4.5,2)
\psframe[fillcolor=gray, fillstyle=solid](5,2)(4.5,2.5)
\psframe[fillcolor=gray, fillstyle=solid](5,2.5)(4.5,3)
\psframe[fillcolor=gray, fillstyle=solid](5,3)(4.5,3.5)
\psframe[fillcolor=gray, fillstyle=solid](5,3.5)(4.5,4)
\psframe[fillcolor=gray, fillstyle=solid](5,4)(4.5,4.5)
\psframe[fillcolor=gray, fillstyle=solid](5,4.5)(4.5,5)
\psframe[fillcolor=gray, fillstyle=solid](5,5)(4.5,5.5)
\put(6,0){
\psframe(0,0)(.5,.5)
\psframe(0,.5)(.5,1)
\psframe(0,1)(.5,1.5)
\psframe(0,1.5)(.5,2)
\psframe(0,2)(.5,2.5)
\psframe(0,2.5)(.5,3)
\psframe(0,3)(.5,3.5)
\psframe(0,3.5)(.5,4)
\psframe(0,4)(.5,4.5)
\psframe(0,4.5)(.5,5)
\psframe(0,5)(.5,5.5)
\psframe(1,0)(.5,.5)
\psframe(1,.5)(.5,1)
\psframe(1,1)(.5,1.5)
\psframe(1,1.5)(.5,2)
\psframe(1,2)(.5,2.5)
\psframe(1,2.5)(.5,3)
\psframe(1,3)(.5,3.5)
\psframe(1,3.5)(.5,4)
\psframe(1,4)(.5,4.5)
\psframe(1,4.5)(.5,5)
\psframe(1,5)(.5,5.5)
\psframe(1,0)(1.5,.5)
\psframe(1,.5)(1.5,1)
\psframe(1,1)(1.5,1.5)
\psframe(1,1.5)(1.5,2)
\psframe(1,2)(1.5,2.5)
\psframe(1,2.5)(1.5,3)
\psframe(1,3)(1.5,3.5)
\psframe(1,3.5)(1.5,4)
\psframe(1,4)(1.5,4.5)
\psframe(1,4.5)(1.5,5)
\psframe(1,5)(1.5,5.5)
\psframe(2,0)(1.5,.5)
\psframe(2,.5)(1.5,1)
\psframe(2,1)(1.5,1.5)
\psframe(2,1.5)(1.5,2)
\psframe(2,2)(1.5,2.5)
\psframe(2,2.5)(1.5,3)
\psframe(2,3)(1.5,3.5)
\psframe(2,3.5)(1.5,4)
\psframe(2,4)(1.5,4.5)
\psframe(2,4.5)(1.5,5)
\psframe(2,5)(1.5,5.5)
\psframe(2,0)(2.5,.5)
\psframe(2,.5)(2.5,1)
\psframe(2,1)(2.5,1.5)
\psframe(2,1.5)(2.5,2)
\psframe(2,2)(2.5,2.5)
\psframe(2,2.5)(2.5,3)
\psframe(2,3)(2.5,3.5)
\psframe(2,3.5)(2.5,4)
\psframe(2,4)(2.5,4.5)
\psframe(2,4.5)(2.5,5)
\psframe(2,5)(2.5,5.5)
\psframe(3,0)(2.5,.5)
\psframe(3,.5)(2.5,1)
\psframe(3,1)(2.5,1.5)
\psframe(3,1.5)(2.5,2)
\psframe(3,2)(2.5,2.5)
\psframe(3,2.5)(2.5,3)
\psframe(3,3)(2.5,3.5)
\psframe(3,3.5)(2.5,4)
\psframe(3,4)(2.5,4.5)
\psframe(3,4.5)(2.5,5)
\psframe(3,5)(2.5,5.5)
\psframe[fillcolor=gray, fillstyle=solid](3,0)(3.5,.5)
\psframe[fillcolor=gray, fillstyle=solid](3,.5)(3.5,1)
\psframe[fillcolor=gray, fillstyle=solid](3,1)(3.5,1.5)
\psframe[fillcolor=gray, fillstyle=solid](3,1.5)(3.5,2)
\psframe[fillcolor=gray, fillstyle=solid](3,2)(3.5,2.5)
\psframe[fillcolor=gray, fillstyle=solid](3,2.5)(3.5,3)
\psframe[fillcolor=gray, fillstyle=solid](3,3)(3.5,3.5)
\psframe[fillcolor=gray, fillstyle=solid](3,3.5)(3.5,4)
\psframe[fillcolor=gray, fillstyle=solid](3,4)(3.5,4.5)
\psframe[fillcolor=gray, fillstyle=solid](3,4.5)(3.5,5)
\psframe[fillcolor=gray, fillstyle=solid](3,5)(3.5,5.5)
\psframe(4,0)(3.5,.5)
\psframe[fillcolor=gray, fillstyle=solid](4,.5)(3.5,1)
\psframe[fillcolor=gray, fillstyle=solid](4,1)(3.5,1.5)
\psframe[fillcolor=gray, fillstyle=solid](4,1.5)(3.5,2)
\psframe(4,2)(3.5,2.5)
\psframe[fillcolor=gray, fillstyle=solid](4,2.5)(3.5,3)
\psframe(4,3)(3.5,3.5)
\psframe[fillcolor=gray, fillstyle=solid](4,3.5)(3.5,4)
\psframe[fillcolor=gray, fillstyle=solid](4,4)(3.5,4.5)
\psframe[fillcolor=gray, fillstyle=solid](4,4.5)(3.5,5)
\psframe[fillcolor=gray, fillstyle=solid](4,5)(3.5,5.5)
\psframe(4,0)(4.5,.5)
\psframe(4,.5)(4.5,1)
\psframe[fillcolor=gray, fillstyle=solid](4,1)(4.5,1.5)
\psframe[fillcolor=gray, fillstyle=solid](4,1.5)(4.5,2)
\psframe(4,2)(4.5,2.5)
\psframe[fillcolor=gray, fillstyle=solid](4,2.5)(4.5,3)
\psframe(4,3)(4.5,3.5)
\psframe[fillcolor=gray, fillstyle=solid](4,3.5)(4.5,4)
\psframe[fillcolor=gray, fillstyle=solid](4,4)(4.5,4.5)
\psframe[fillcolor=gray, fillstyle=solid](4,4.5)(4.5,5)
\psframe[fillcolor=gray, fillstyle=solid](4,5)(4.5,5.5)
\psframe[fillcolor=gray, fillstyle=solid](5,0)(4.5,.5)
\psframe(5,.5)(4.5,1)
\psframe[fillcolor=gray, fillstyle=solid](5,1)(4.5,1.5)
\psframe[fillcolor=gray, fillstyle=solid](5,1.5)(4.5,2)
\psframe[fillcolor=gray, fillstyle=solid](5,2)(4.5,2.5)
\psframe[fillcolor=gray, fillstyle=solid](5,2.5)(4.5,3)
\psframe[fillcolor=gray, fillstyle=solid](5,3)(4.5,3.5)
\psframe[fillcolor=gray, fillstyle=solid](5,3.5)(4.5,4)
\psframe[fillcolor=gray, fillstyle=solid](5,4)(4.5,4.5)
\psframe[fillcolor=gray, fillstyle=solid](5,4.5)(4.5,5)
\psframe[fillcolor=gray, fillstyle=solid](5,5)(4.5,5.5)
}
\end{pspicture}
\end{center}

Using the one-hand-tied principle we get
\[|G_{11,33}|\le 14|G_{11,2}|+|G_{11,5}|\le \{-\frac{1}{2}|-\frac{5}{2}||-3|-\frac{9}{2}\}+\frac{5}{2}<0\]
and
\[|G_{11,35}|\le 15|G_{11,2}|+|G_{11,5}|\le \{\frac{1}{2}|-\frac{3}{2}||-2|-\frac{7}{2}|||-4\}+\{3|2\}<0\]
which completes the proof.
\end{proof}
\section{Asymptotics of larger boards}
The charts of what is known for small boards, including $\group$-values where they are amenable to calculation, show some definite trends. It seems likely that for all $m$, for $N\gg m$, the outcome class of $G_{m,N}$ is $H$. $N$ may even be bounded by $m$ plus a constant. 

We cannot establish this but we can show some modest results that are best interpreted as evidence for this conjecture.

\begin{proposition}\label{prop:better2m}
For all positive $n$ and $k$, the outcome class of $G_{n,2nk}$ is $H$.
\end{proposition}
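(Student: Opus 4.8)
The plan is to use Proposition~\ref{prop:2m} together with the one-hand-tied principle to reduce to a single board $G_{n,2n}$, and then to win that board by hand. By Proposition~\ref{prop:2m} we already know $|G_{n,2nk}|\le 0$, so the outcome class is $2$ or $H$, and it is $H$ exactly when $|G_{n,2nk}|\ne 0$, i.e.\ when Horizontal, moving first, has a move to a position of $\group$-value $\le 0$ (after which Vertical is on the move and loses). I would then reduce to $k=1$: by Proposition~\ref{prop:onehandtied}, $|G_{n,2nk}|\le |G_{n,2n(k-1)}|+|G_{n,2n}|$, and the first summand is $\le 0$ by Proposition~\ref{prop:2m}, so $|G_{n,2nk}|\le |G_{n,2n}|$ for every $k$; hence it is enough to prove $|G_{n,2n}|<0$. (Strategically: for $k\ge 2$ Horizontal ties one hand along the block boundaries at multiples of $2n$, makes his winning $G_{n,2n}$-opening inside the leftmost block, and is then left with that leftover position — of value $\le 0$ — disjoint-summed with $k-1$ untouched copies of $G_{n,2n}$, each $\le 0$; Vertical, to move on a total of value $\le 0$, loses.)

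For the base case $G_{n,2n}$ I would have Horizontal open with a carefully chosen horizontal domino $d$ — the small cases ($n=1,2$) suggest one placed centrally along the top edge rather than in a corner, which can fail — and then argue that the resulting region $R=G_{n,2n}\setminus d$ satisfies $|R|\le 0$, i.e.\ that Horizontal wins $R$ moving second. One approach is to adapt the second-player defensive strategy that underlies Proposition~\ref{prop:2m} so that the already-placed domino $d$ serves as a reserved Horizontal move, guaranteeing Horizontal is never the first player with no legal move; a second is to cut $R$ along an interior column using the one-hand-tied principle into pieces whose $\group$-values are pinned down by Berlekamp's formula and small computations, and to check that the sum is $\le 0$.

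The main obstacle is precisely this last step — establishing $|R|\le 0$ uniformly in $n$. A bare horizontal-domino tiling of $R$ does not suffice, since $G_{2,2}$ is tiled by horizontal dominoes yet equals $*$, and the naive pairing reply fails because a single vertical domino spoils a half-domino in each of two rows while one horizontal reply repairs only one. I expect the work to lie in choosing $d$ so that $R$ keeps enough structure, or enough slack, for Horizontal's defense to go through with exactly one move to spare, thereby upgrading the ``$2$ or $H$'' of Proposition~\ref{prop:2m} to an outright loss for Vertical.
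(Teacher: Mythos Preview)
Your reduction to $k=1$ via the one-hand-tied principle is correct and matches the paper, and your instinct to open with a horizontal domino in the top row straddling the midline (columns $n$ and $n+1$) is exactly the move the paper makes. The gap you acknowledge is real, but the missing idea is simpler than either of your two suggested attacks.

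After Horizontal plays that central top domino and then re-imposes the red line at column $n$, the board falls into two pieces: each is an $n\times n$ square with one top corner square deleted. These two pieces are not $90^\circ$ rotations of one another, but each is a $90^\circ$ rotation of the other composed with a reflection, and reflections leave Domineering values unchanged. Hence the very same copying strategy that proves Proposition~\ref{prop:2m} --- Horizontal answers each Vertical move in one half by the corresponding rotated move in the other half --- still applies verbatim to $R$, giving $|R|\le 0$ for every $n$ at once. No Berlekamp-formula computations, no column cuts, and no ``reserved move'' bookkeeping are needed; the domino $d$ is not held in reserve but simply arranges that the two halves remain symmetric enough for the mirror to work.
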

\begin{proof}
It suffices to show the result for $G_{n,2n}$.

Lachmann et al. show that this outcome class must be $2$ or $H$ using the one-hand-tied principle. Their proof is as follows. The one-hand-tied principle is valid because the horizontal player can only hurt her outcome class by refusing to move across the red line:

\begin{center}
\begin{pspicture}(3,1.5)
\psframe(0,0)(.5,.5)
\psframe(0,.5)(.5,1)
\psframe(0,1)(.5,1.5)
\psframe(1,0)(.5,.5)
\psframe(1,.5)(.5,1)
\psframe(1,1)(.5,1.5)
\psframe(1,0)(1.5,.5)
\psframe(1,.5)(1.5,1)
\psframe(1,1)(1.5,1.5)
\psframe(2,0)(1.5,.5)
\psframe(2,.5)(1.5,1)
\psframe(2,1)(1.5,1.5)
\psframe(2,0)(2.5,.5)
\psframe(2,.5)(2.5,1)
\psframe(2,1)(2.5,1.5)
\psframe(3,0)(2.5,.5)
\psframe(3,.5)(2.5,1)
\psframe(3,1)(2.5,1.5)
\psline[linecolor=red, linewidth=.06](1.5,0)(1.5,1.5)
\end{pspicture}
\end{center}

Because the position where the horizontal player does not move across the red line is two copies of a position which is invariant under ninety degree rotations, if the horizontal player goes second, she can copy the vertical player's moves on one half, rotating them on the other. So the position with the red line has outcome class $2$, meaning the original position has outcome class $2$ or $H$.

On the other hand, if the horizontal player goes first on the original board, she can move across the red line once at the beginning and then refuse to do so from then on, leaving the following position:

\begin{center}
\begin{pspicture}(3,1.5)
\psframe(0,0)(.5,.5)
\psframe(0,.5)(.5,1)
\psframe(0,1)(.5,1.5)
\psframe(1,0)(.5,.5)
\psframe(1,.5)(.5,1)
\psframe(1,1)(.5,1.5)
\psframe(1,0)(1.5,.5)
\psframe(1,.5)(1.5,1)
\psframe[fillcolor=gray,fillstyle=solid](1,1)(2,1.5)
\psframe(2,0)(1.5,.5)
\psframe(2,.5)(1.5,1)
\psframe(2,0)(2.5,.5)
\psframe(2,.5)(2.5,1)
\psframe(2,1)(2.5,1.5)
\psframe(3,0)(2.5,.5)
\psframe(3,.5)(2.5,1)
\psframe(3,1)(2.5,1.5)
\psline[linecolor=red, linewidth=.06](1.5,0)(1.5,1)
\psline[linecolor=red, linewidth=.06,linestyle=dotted](1.5,1)(1.5,1.5)
\end{pspicture}
\end{center}

Rotating one of the halves by ninety degrees is not literally the same as the other side, but they are the same up to horizontal and vertical reflection, which do not change the value of a position. So after the horizontal player has made her first move, she can refuse to move across the red line and copy the vertical player's moves as in the proof of Lachmann et al. This shows that the horizontal player can win going first.
\end{proof}

\begin{proposition}\label{prop:gcd}
Suppose that the outcome class of $G_{m,j}$ is $H$ and that the outcome class of $G_{m,k}$ is $H$ or $2$. Then for sufficiently high $N$, the outcome class of $G_{m, N\gcd(j,k)}$ is $H$. 
\end{proposition}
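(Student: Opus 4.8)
The plan is to combine the one-hand-tied principle (Proposition~\ref{prop:onehandtied}) with the classical ``coin problem'' of Sylvester and Frobenius. The mechanism is this: once a single summand of outcome class $H$ is available in a disjoint union, the one-hand-tied principle drags the whole union down to outcome class $H$, provided every other summand has outcome class $2$ or $H$.

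First I would set $d=\gcd(j,k)$ and write $j=dj'$, $k=dk'$ with $\gcd(j',k')=1$, and record two consequences of Proposition~\ref{prop:onehandtied}. For every $a\ge 1$, the value $|G_{m,aj}|\le a\,|G_{m,j}|$ is a sum of copies of the strictly negative element $|G_{m,j}|$, hence is strictly negative, so $G_{m,aj}$ has outcome class $H$. For every $b\ge 0$, the value $|G_{m,bk}|\le b\,|G_{m,k}|\le 0$ (the case $b=0$ being the empty board), so $G_{m,bk}$ has outcome class $2$ or $H$. Combining, for any $a\ge 1$ and $b\ge 0$ we get $|G_{m,aj+bk}|\le |G_{m,aj}|+|G_{m,bk}|<0$, since in a partially ordered abelian group the sum of a strictly negative element and a non-positive element is strictly negative; thus $G_{m,aj+bk}$ has outcome class $H$.

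It then remains to check that every sufficiently large multiple of $d$ belongs to $\{\,aj+bk : a\ge 1,\ b\ge 0\,\}$. After dividing by $d$, this set is $d\cdot\bigl(j'+\langle j',k'\rangle\bigr)$, where $\langle j',k'\rangle=\{\,aj'+bk':a,b\ge 0\,\}$ is the numerical semigroup generated by $j'$ and $k'$. Since $\gcd(j',k')=1$, the Sylvester--Frobenius theorem says $\langle j',k'\rangle$ contains every integer at least $(j'-1)(k'-1)$, so $j'+\langle j',k'\rangle$ contains every integer at least $j'+(j'-1)(k'-1)$. Hence for every $N\ge j'+(j'-1)(k'-1)$ one may write $Nd=aj+bk$ with $a\ge 1$ and $b\ge 0$, and by the previous paragraph $G_{m,Nd}$ has outcome class $H$.

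The one subtlety --- the main thing to be careful about --- is that we are only told $G_{m,k}$ has outcome class $2$ or $H$, not necessarily $H$; so a representation $Nd=bk$ with $a=0$ is useless to us, and we genuinely need the ``$a\ge 1$'' constraint. That is exactly why the argument passes to $j'+\langle j',k'\rangle$ rather than to $\langle j',k'\rangle$ itself: the harmless shift by $j'$ forces at least one $H$-summand into the decomposition while changing nothing asymptotically. Everything else is bookkeeping with the partial order on $\group$ and the standard cofiniteness of a numerical semigroup.
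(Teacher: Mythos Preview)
Your proof is correct and follows essentially the same approach as the paper's: both invoke the Sylvester--Frobenius/Schur coin problem to represent all sufficiently large multiples of $\gcd(j,k)$ as $aj+bk$ with $a\ge 1$ and $b\ge 0$, then apply the one-hand-tied principle. The paper is terser (it ensures $a\ge 1$ by noting that once $N\gcd(j,k)>jk$ any representation with $a=0$ can be modified), while you make the shift by $j'$ explicit and spell out the order-theoretic bookkeeping, but there is no substantive difference.
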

\begin{proof}
Schur's theorem says that only finitely many positive multiples of $\gcd(j,k)$ cannot be expressed as a sum of the form $aj+bk$ with nonnegative $a$ and $b$. If $N\gcd(j,k)>jk$, then any such expression can be modified so that $a$ is strictly positive. Then the one-hand-tied principle implies the result.
\end{proof}
\begin{corollary}
Suppose that the outcome class of $G_{m,j}$ is $H$ or $2$. Then for sufficiently high $N$, the outcome class of $G_{m, N\gcd(j,2m)}$ is $H$.
\end{corollary}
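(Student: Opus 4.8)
The plan is to recognize the corollary as an immediate specialization of Proposition~\ref{prop:gcd}, with Proposition~\ref{prop:better2m} supplying the auxiliary board needed to invoke it. First I would apply Proposition~\ref{prop:better2m} with $n=m$ and $k=1$ to conclude that $G_{m,2m}$ has outcome class $H$. This is the crucial board: its outcome class is known and favorable, and its width $2m$ is exactly the quantity we want to see appear in the conclusion.

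Next I would apply Proposition~\ref{prop:gcd}, taking its hypothesis board $G_{m,j}$ to be the board $G_{m,2m}$ just shown to have outcome class $H$, and taking its second board $G_{m,k}$ to be the board $G_{m,j}$ of the corollary's hypothesis, which has outcome class $H$ or $2$. Proposition~\ref{prop:gcd} then gives that for sufficiently large $N$ the outcome class of $G_{m,N\gcd(2m,j)}$ is $H$, which is precisely the assertion to be proved.

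There is essentially no genuine obstacle here; all the real content lies in Propositions~\ref{prop:better2m} and~\ref{prop:gcd}, and the corollary merely repackages them. The one point that requires care is the asymmetry of the hypotheses in Proposition~\ref{prop:gcd}: one of its two boards must be known to have outcome class exactly $H$, not merely $H$ or $2$. It is $G_{m,2m}$, via Proposition~\ref{prop:better2m}, that fills that slot, which frees the board $G_{m,j}$ given in the corollary to be the one with the weaker hypothesis.
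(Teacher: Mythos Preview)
Your proposal is correct and matches the paper's intended argument: the corollary is stated without proof immediately after Proposition~\ref{prop:gcd}, and the implicit derivation is exactly the one you give, combining Proposition~\ref{prop:better2m} (to supply a board $G_{m,2m}$ of outcome class $H$) with Proposition~\ref{prop:gcd}. Your observation about the asymmetry of the hypotheses in Proposition~\ref{prop:gcd}, and why Proposition~\ref{prop:better2m} rather than the weaker Proposition~\ref{prop:2m} is needed, is exactly right.
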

\begin{lemma}\label{lemma:infv}
Suppose that for a fixed height $m$, infinitely many of the boards $G_{m,n}$ have outcome class $V$. Then there exists a $k<2m$ so that for all nonnegative $i$, all boards of the form $G_{m,k+2mi}$ have outcome class $V$.
\end{lemma}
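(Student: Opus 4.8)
The plan is to combine a pigeonhole argument with the one-hand-tied principle (Proposition~\ref{prop:onehandtied}) and Proposition~\ref{prop:2m}, using the one-hand-tied inequality in the direction that runs from a wide board \emph{down} to narrower ones.

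First I would pass to a single residue class. Let $S$ be the set of widths $n$ for which $G_{m,n}$ has outcome class $V$; by hypothesis $S$ is infinite, so some residue $k$ modulo $2m$ is represented infinitely often in $S$. This $k$ cannot be $0$: Proposition~\ref{prop:2m} says every $G_{m,2m\ell}$ has outcome class $2$ or $H$, so no multiple of $2m$ lies in $S$. Hence $0<k<2m$, and there are infinitely many $i\ge 0$ with $k+2mi\in S$.

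Next, fix an arbitrary nonnegative integer $i$ and pick some $j>i$ with $k+2mj\in S$ (possible since $S$ meets the class of $k$ in an infinite set). Applying the one-hand-tied principle with $n_1=k+2mi$ and $n_2=2m(j-i)$ gives
\[|G_{m,k+2mj}|\le|G_{m,k+2mi}|+|G_{m,2m(j-i)}|.\]
Since $G_{m,2m(j-i)}$ has outcome class $2$ or $H$ (Proposition~\ref{prop:2m}, or Proposition~\ref{prop:better2m}), we have $|G_{m,2m(j-i)}|\le 0$, so rearranging the display yields
\[|G_{m,k+2mi}|\ge|G_{m,k+2mj}|-|G_{m,2m(j-i)}|\ge|G_{m,k+2mj}|>0,\]
the final inequality because $k+2mj\in S$ means $G_{m,k+2mj}$ has outcome class $V$. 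Thus $|G_{m,k+2mi}|>0$, i.e.\ $G_{m,k+2mi}$ has outcome class $V$; as $i$ was arbitrary, this is the lemma.

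I do not expect a genuinely hard step here. The one thing to get right — and the natural place to go wrong — is the direction of the one-hand-tied estimate: one cannot push outcome class $V$ upward by appending copies of $G_{m,2m}$, since that board has outcome class $H$ and tacking it on can only decrease the $\group$-value; instead one uses a far-out $V$-board to bound the $\group$-value of a narrower board from below, with the correction term $|G_{m,2m(j-i)}|$ being harmlessly nonpositive. The remainder is pigeonhole bookkeeping, including the small observation that Proposition~\ref{prop:2m} excludes the residue $0$ and hence forces $k<2m$.
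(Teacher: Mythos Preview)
Your argument is correct and rests on the same observation as the paper's: combining the one-hand-tied inequality with $|G_{m,2m\ell}|\le 0$ (Proposition~\ref{prop:2m}) gives $|G_{m,k+2mj}|\le|G_{m,k+2mi}|$ for $j\ge i$, so outcome class $V$ is downward-closed within each residue class modulo $2m$. The paper packages this as a short proof by contradiction (push ``not $V$'' forward from a single witness in every class), whereas you pigeonhole to one class and pull $V$ backward; the substance is the same, and your explicit exclusion of the residue $k=0$ is a nice touch the paper leaves implicit.
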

\begin{proof}
Suppose this is false. Then for each $k$ there is some $i_k$ so that $G_{m, k+2mi_k}$ can be won by the horizontal player either going first or going second. Then the same is true for $G_{m, k+2mi}$ for any $i>i_k$ and only finitely many boards of height $m$ are of outcome class $V$, a contradiction.
\end{proof}
\begin{proposition}
For a fixed height $m$, only finitely many of the boards $G_{m,n}$ have outcome class $2$.
\end{proposition}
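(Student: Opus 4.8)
The plan is to prove a bit more than what is asked, namely that \emph{at most $2m$} of the boards $G_{m,n}$ have outcome class $2$. I would do this by sorting the widths $n$ according to their residue modulo $2m$ and showing that each of the $2m$ residue classes contains at most one width giving outcome class $2$.

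First I would record two facts. Outcome class $2$ for a board is the same as its $\group$-value being equal to $0$, and outcome class $H$ is the same as its $\group$-value being strictly less than $0$; this is the dictionary from the introduction. Second, by Proposition~\ref{prop:better2m}, for every positive integer $\ell$ the board $G_{m,2m\ell}$ has outcome class $H$, hence $|G_{m,2m\ell}|<0$. Now fix a residue $k$ with $0\le k<2m$ and suppose, toward a contradiction, that two widths in this class, $n_1=k+2mi_1$ and $n_2=k+2mi_2$ with $i_1<i_2$, both have outcome class $2$. Then $|G_{m,n_1}|=0$, while $n_2-n_1=2m(i_2-i_1)$ is a positive multiple of $2m$, so $|G_{m,n_2-n_1}|<0$. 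Applying the one-hand-tied principle (Proposition~\ref{prop:onehandtied}) to the decomposition $n_2=n_1+(n_2-n_1)$ gives
\[|G_{m,n_2}|\;\le\;|G_{m,n_1}|+|G_{m,n_2-n_1}|\;=\;|G_{m,n_2-n_1}|\;<\;0,\]
so $G_{m,n_2}$ has outcome class $H$, contradicting the assumption that it has outcome class $2$. Hence each residue class modulo $2m$ contributes at most one board of outcome class $2$, so there are at most $2m$ such boards in total, which is finite.

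I do not expect a serious obstacle here; the argument is short once Proposition~\ref{prop:better2m} is in hand, and indeed the whole point is that that proposition (our improvement of Lachmann et al.) is exactly what makes this work. The two places that need a little care are: using the one-hand-tied inequality in the correct direction, since it bounds the \emph{wider} board from above, which is precisely what upgrades "$\le 0$" to "$<0$" when one summand is strictly negative; and noting that the degenerate cases ($k=0$, or small $i_1$) cause no trouble, because $n_2-n_1$ is a genuine positive multiple of $2m$ regardless of $k$ and $i_1$. As a byproduct the argument shows slightly more: along any arithmetic progression of widths with common difference $2m$, outcome class $2$ occurs at most once, and never after an occurrence of outcome class $H$ in that progression.
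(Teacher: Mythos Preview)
Your argument is correct and is essentially the paper's own proof, written out in more detail: the paper simply observes that if $G_{m,k}$ has outcome class $2$ then $G_{m,2mi+k}$ has outcome class $H$ (by Proposition~\ref{prop:better2m} together with Proposition~\ref{prop:onehandtied}), and concludes that there are at most $2m$ such boards. Your contradiction argument within each residue class modulo $2m$ is the same mechanism, and your explicit bound of $2m$ matches the paper's.
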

\begin{proof}
If $G_{m,k}$ has outcome class $2$ then $G_{m,2mi+k}$ has outcome class $H$. So there can be at most $2m$ boards of height $G$ and outcome class $2$.
\end{proof}
\begin{proposition}
Let $m$ and $n$ be odd with $\gcd(m,n)=1$. Then either the set of boards of height $m$ or the set of boards of height $n$ contains only finitely many boards of outcome class $V$.
\end{proposition}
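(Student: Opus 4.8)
The plan is to argue by contradiction: suppose that infinitely many boards of height $m$ have outcome class $V$ and that infinitely many boards of height $n$ do as well, and then produce a single rectangular board that is forced at once to have outcome class $V$ and to have outcome class $H$.

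First I would feed both hypotheses into Lemma~\ref{lemma:infv}. It yields a residue $k$ with $1\le k<2m$ such that $G_{m,w}$ has outcome class $V$ for every large enough $w\equiv k\pmod{2m}$; here $k\neq 0$, since otherwise the lemma's conclusion would assert that every $G_{m,2mi}$ has outcome class $V$, contradicting Proposition~\ref{prop:better2m}. Symmetrically, there is a residue $\ell$ with $1\le \ell<2n$ such that $G_{n,w}$ has outcome class $V$ for every large enough $w\equiv \ell\pmod{2n}$.

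Next comes the one place where the hypotheses that $m$ and $n$ are \emph{odd} and coprime enter: since $m$ is odd and $\gcd(m,n)=1$ we get $\gcd(m,2n)=1$, and symmetrically $\gcd(n,2m)=1$. Hence the congruence $bn\equiv k\pmod{2m}$ has solutions $b$ of arbitrarily large size, and the congruence $am\equiv \ell\pmod{2n}$ has solutions $a$ of arbitrarily large size. I would fix such $a$ and $b$, taken large enough that $am$ and $bn$ both fall in the ranges where the conclusions of the previous paragraph apply, and then examine the single board $G_{am,bn}$. Reading it as $a$ copies of $G_{m,bn}$ stacked vertically and applying the one-hand-tied principle (Proposition~\ref{prop:onehandtied}, reflected across the diagonal so that it now restricts the vertical player; recall that reflecting a position across the diagonal negates its $\group$-value) gives $|G_{am,bn}|\ge a\,|G_{m,bn}|$, and since $bn$ is large with $bn\equiv k\pmod{2m}$ the board $G_{m,bn}$ has outcome class $V$, so $|G_{m,bn}|>0$ and therefore $G_{am,bn}$ has outcome class $V$. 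Running the identical argument with the two axes interchanged, $G_{bn,am}$ is $b$ copies of $G_{n,am}$ stacked vertically, $am$ is large with $am\equiv \ell\pmod{2n}$ so $G_{n,am}$ has outcome class $V$, hence $|G_{bn,am}|>0$ and $|G_{am,bn}|=-|G_{bn,am}|<0$, so $G_{am,bn}$ has outcome class $H$. A board cannot lie in both classes, so the assumption is untenable.

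I do not expect any step to be genuinely hard once the right board is in view; the only thing to spot is that one wants a rectangle whose height is controlled by the height-$n$ hypothesis and whose width is controlled by the height-$m$ hypothesis, and that oddness is precisely what lets both congruences be solved simultaneously. Two points are worth noting: the full strength of Lemma~\ref{lemma:infv} is used, not merely the pigeonhole fact that some residue class mod $2m$ contains infinitely many $V$'s, because the argument must land on widths that are at the same time multiples of $n$ and large; and both hypotheses are genuinely needed, as one would hope given that the one-height version of the statement is only conjectural.
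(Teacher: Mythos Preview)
Your proof is correct and is essentially the same argument as the paper's: both assume infinitely many $V$'s at each height, invoke Lemma~\ref{lemma:infv} to pin down residue classes, use $\gcd(m,2n)=\gcd(n,2m)=1$ to solve the relevant congruences, and then stack via the reflected one-hand-tied principle to exhibit a rectangle that must be both $V$ and $H$. The only cosmetic difference is that the paper reflects one of the two height-$n$ conclusions across the diagonal at the outset (obtaining an $H$ family directly), whereas you keep both conclusions in the $V$ orientation and reflect only the final board; this is the same computation.
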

\begin{proof}
Assume that both sets contain infinitely many boards of outcome class $V$.
By Lemma~\ref{lemma:infv}, for some $k$, all boards of form $G_{m, k+2mi}$ have outcome class $V$. The same is true for boards of the form $G_{mr, k+2mi}$ by the one-hand-tied principle.

Similarly (by diagonal reflection) all boards of the form $G_{\ell+2nj, ns}$ have outcome class $H$.

Since $\gcd(m,2n)=1=\gcd(2m,n)$, there are choices of positive $i$, $j$, $r$, and $s$ such that $mr=\ell+2nj$ and $k+2mi=ns$. Then the board $G_{mr, ns}$ has outcome class both $H$ and $V$, a contradiction.
\end{proof}
\begin{corollary}
There is at most one prime $p$ such that the set of boards of height $p$, $\{G_{p,n}\}$, contains infinitely many boards of outcome class $V$.
\end{corollary}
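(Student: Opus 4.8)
The plan is to read the corollary off from the preceding proposition, after one elementary observation that disposes of the even prime. Suppose toward a contradiction that $p$ and $q$ are distinct primes for which each of $\{G_{p,n}\}_n$ and $\{G_{q,n}\}_n$ contains infinitely many boards of outcome class $V$. Since $p\neq q$, at most one of $p,q$ equals $2$. The main case is the one in which both are odd: there $\gcd(p,q)=1$ because $p$ and $q$ are distinct primes, so the preceding proposition applies with $m=p$ and $n=q$ and forces one of the two families to contain only finitely many boards of outcome class $V$, a contradiction. It therefore remains only to rule out the possibility that one of $p,q$ equals $2$, i.e.\ to check that $\{G_{2,n}\}_n$ contains only finitely many boards of outcome class $V$.

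For that I would apply Lemma~\ref{lemma:infv} with $m=2$: if $\{G_{2,n}\}_n$ had infinitely many boards of outcome class $V$, then there would be some $k\in\{0,1,2,3\}$ with $G_{2,k+4i}$ of outcome class $V$ for every $i\ge 0$. Each residue fails. For $k=0$ the boards $G_{2,4i}$ are of outcome class $H$ by Proposition~\ref{prop:better2m}; the boards $G_{2,2}$ and $G_{2,3}$ are of outcome class $1$ by direct inspection, which kills $k=2$ and $k=3$; and for $k=1$ one checks, using Berlekamp's formula for $|G_{2,n}|$ with $n$ odd~\cite{Berlekamp:BD}, that $G_{2,5}$ is not of outcome class $V$. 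Hence no such $k$ exists, so $\{G_{2,n}\}_n$ contains only finitely many boards of outcome class $V$ (in fact $G_{2,1}$ is the only one). Either way we reach a contradiction, so at most one prime $p$ can have $\{G_{p,n}\}_n$ containing infinitely many boards of outcome class $V$.

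The only step that is not a pure formality is this exclusion of the prime $2$, and the reason it is needed at all is structural: the preceding proposition genuinely requires \emph{both} of its heights to be odd --- its proof rests on $\gcd(m,2n)=\gcd(2m,n)=1$, which already fails once one height equals $2$ --- so the even prime cannot be subsumed and must be dispatched by the separate (but very easy) argument above, combining Lemma~\ref{lemma:infv} with the small-width data for $2\times n$ boards. Everything else is just the proposition together with the fact that distinct primes are coprime.
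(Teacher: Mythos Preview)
Your approach is exactly the paper's: apply the preceding proposition to any pair of distinct odd primes (distinct primes are coprime), and handle $p=2$ separately by appealing to what is known about the $2\times n$ boards. The paper does the last step in a single sentence (``$p=2$ is already known to contain only finitely many wins for the vertical player by other means''); your idea of making this explicit via Lemma~\ref{lemma:infv} and a case check on residues mod $4$ is perfectly reasonable.

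However, there is a factual error in your $k=1$ case. The board $G_{2,5}$ \emph{is} of outcome class $V$ (see the table in the appendix; indeed $|G_{2,5}|>0$ by Berlekamp's formula), so it does not kill the residue class $k=1$. Likewise $G_{2,9}$ is $V$, so your parenthetical remark that $G_{2,1}$ is the only $2\times n$ board of outcome class $V$ is also false. The residue class $k=1$ does fail, but not until $i=3$: from the table, $G_{2,13}$ has outcome class $2$. With that correction your argument goes through unchanged.
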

\begin{proof}
The greatest common divisor of two primes is of course $1$. $p=2$ is already known to contain only finitely many wins for the vertical player by other means.
\end{proof}



%

%

\newpage
\appendix
\section{Table of known outcome classes}

\begin{figure}[h!]
\tabcolsep=0.04cm
\tiny
\begin{tabular}{|c||c|c|c|c|c|c|c|c|c|c|c|c|c|c|c|c|c|c|c|c|c|c|c|c|c|c|c|c|c|c|c|}
\hline
  & 1 & 2 & 3 & 4 & 5 & 6 & 7 & 8 & 9 & 10& 11& 12& 13& 14& 15& 16& 17& 18& 19& 20& 21& 22& 23& 24& 25& 26& 27&28 & 29&30&31\\
\hline\hline
 1&\pp&\rr&\rr&\rr&\rr&\rr&\rr&\rr&\rr&\rr&\rr&\rr&\rr&\rr&\rr&\rr&\rr&\rr&\rr&\rr&\rr&\rr&\rr&\rr&\rr&\rr&\rr&\rr&\rr&\rr&\rr
\\\hline
 2&\vv&\nn&\nn&\rr&\vv&\nn&\nn&\rr&\vv&\nn&\nn&\rr&\pp&\nn&\nn&\rr&\rr&\nn&\nn&\rr&\rr&\rr&\nn&\rr&\rr&\rr&\nn&\rr&\rr&\rr&\rr
\\\hline
 3&\vv&\nn&\nn&\rr&\rr&\rr&\rr&\rr&\rr&\rr&\rr&\rr&\rr&\rr&\rr&\rr&\rr&\rr&\rr&\rr&\rr&\rr&\rr&\rr&\rr&\rr&\rr&\rr&\rr&\rr&\rr
\\\hline
 4&\vv&\vv&\vv&\nn&\vv&\nn&\vv&\rr&\vv&\rr&\vv&\rr&\pp&\rr&\rr&\rr&\rr&\rr&\rr&\rr&\rr&\rr&\rr&\rr&\rr&\rr&\rr&\rr&\rr&\rr&\rr
\\\hline
 5&\vv&\rr&\vv&\rr&\pp&\rr&\rr&\rr&\rr&\rr&\rr&\rr&\rr&\rr&\rr&\rr&\rr&\rr&\rr&\rr&\rr&\rr&\rr&\rr&\rr&\rr&\rr&\rr&\rr&\rr&\rr
\\\hline
 6&\vv&\nn&\vv&\nn&\vv&\nn&\vv&\rr&\vv&\nn&\nn&\rr&\vv&\rr&\nl&\rr&   &\nr&\nn&\rr&   &\rr&\nr&\rr&\nr&\rr&\nr&\rr&   &\rr&\nr
\\\hline
 7&\vv&\nn&\vv&\rr&\vv&\rr&\nn&\rr&\rr&\rr&\rr&\rr&\rr&\rr&\rr&\rr&\rr&\rr&\rr&\rr&\rr&\rr&\rr&\rr&\rr&\rr&\rr&\rr&\rr&\rr&\rr
\\\hline
 8&\vv&\vv&\vv&\vv&\vv&\vv&\vv&\nn&\vv&\rr&\vv&   &\vv&   &   &\rr&   &\nr&   &\rr&   &   &   &\nr&   &   &   &\nr&   &\rr&
\\\hline
 9&\vv&\rr&\vv&\rr&\vv&\rr&\vv&\rr&\nn&\rr&\nr&\rr&\rr&\rr&\rr&\rr&\rr&\rr&\rr&\rr&\rr&\rr&\rr&\rr&\rr&\rr&\rr&\rr&\rr&\rr&\rr
\\\hline
10&\vv&\nn&\vv&\vv&\vv&\nn&\vv&\vv&\vv&\nn&\nl&   &\vv&   &\nl&   &   &   &   &\rr&   &\nr&   &   &   &\nr&   &   &   &\nr&
\\\hline
11&\vv&\nn&\vv&\rr&\vv&\nn&\vv&\rr&\nl&\nr&\np&\rr&\xl&\rr&\nr&\rr&\nr&\rr&\nr&\rr&\nr&\rr&\nr&\rr&\nr&\rr&\nr&\rr&\nr&\rr&\nr
\\\hline
12&\vv&\vv&\vv&\vv&\vv&\vv&\vv&   &\vv&   &\vv&\np&\vv&   &   &   &   &   &   &   &   &   &   &\rr&   &   &   &   &   & &
\\\hline
13&\vv&\pp&\vv&\pp&\vv&\rr&\vv&\rr&\vv&\rr&\xr&\rr&\np&\rr&\xl&\rr&\xl&\rr&\nr&\rr&\nr&\rr&\nr&\rr&\nr&\rr&\nr&\rr&\nr&\rr&\nr
\\\hline
14&\vv&\nn&\vv&\vv&\vv&\vv&\vv&   &\vv&   &\vv&   &\vv&\np&\nl&   &   &   &   &   &   &   &   &   &   &   &   &\rr&&\nr&
\\\hline
15&\vv&\nn&\vv&\vv&\vv&\nr&\vv&   &\vv&\nr&\nl&   &\xr&\nr&\np&   &   &\nr&   &   &   &   &   &   &   &   &   &&&\rr&
\\\hline
16&\vv&\vv&\vv&\vv&\vv&\vv&\vv&\vv&\vv&   &\vv&   &\vv&   &   &\np&   &   &   &   &   &   &   &   &   &   &   &&&&
\\\hline
17&\vv&\vv&\vv&\vv&\vv&   &\vv&   &\vv&   &\nl&   &\xr&   &   &   &\np&   &   &   &   &   &   &   &   &   &   &&&&
\\\hline
18&\vv&\nn&\vv&\vv&\vv&\nl&\vv&\nl&\vv&   &\vv&   &\vv&   &\nl&   &   &\np&   &   &   &   &   &   &   &   &   &&&&
\\\hline
19&\vv&\nn&\vv&\vv&\vv&\nn&\vv&   &\vv&   &\nl&   &\nl&   &   &   &   &   &\np&   &   &   &   &   &   &   &   &&&&
\\\hline
20&\vv&\vv&\vv&\vv&\vv&\vv&\vv&\vv&\vv&\vv&\vv&   &\vv&   &   &   &   &   &   &\np&   &   &   &   &   &   &   &&&&
\\\hline
21&\vv&\vv&\vv&\vv&\vv&   &\vv&   &\vv&   &\nl&   &\nl&   &   &   &   &   &   &   &\np&   &   &   &   &   &   &&&&
\\\hline
22&\vv&\vv&\vv&\vv&\vv&\vv&\vv&   &\vv&\nl&\vv&   &\vv&   &   &   &   &   &   &   &   &\np&   &   &   &   &   &&&&
\\\hline
23&\vv&\nn&\vv&\vv&\vv&\nl&\vv&   &\vv&   &\nl&   &\nl&   &   &   &   &   &   &   &   &   &\np&   &   &   &   &&&&
\\\hline
24&\vv&\vv&\vv&\vv&\vv&\vv&\vv&\nl&\vv&   &\vv&\vv&\vv&   &   &   &   &   &   &   &   &   &   &\np&   &   &   &&&&
\\\hline
25&\vv&\vv&\vv&\vv&\vv&\nl&\vv&   &\vv&   &\nl&   &\nl&   &   &   &   &   &   &   &   &   &   &   &\np&   &   &&&&
\\\hline
26&\vv&\vv&\vv&\vv&\vv&\vv&\vv&   &\vv&\nl&\vv&   &\vv&   &   &   &   &   &   &   &   &   &   &   &   &\np&   &&&&
\\\hline
27&\vv&\nn&\vv&\vv&\vv&\nl&\vv&   &\vv&   &\nl&   &\nl&   &   &   &   &   &   &   &   &   &   &   &   &   &\np&&&&
\\\hline
28&\vv&\vv&\vv&\vv&\vv&\vv&\vv&\nl&\vv&   &\vv&   &\vv&\vv&   &   &   &   &   &   &   &   &   &   &   &   &&\np&&&
\\\hline
29&\vv&\vv&\vv&\vv&\vv&   &\vv&   &\vv&   &\nl&   &\nl&   &   &   &   &   &   &   &   &   &   &   &   &   &&&\np&&
\\\hline
30&\vv&\vv&\vv&\vv&\vv&\vv&\vv&\vv&\vv&\nl&\vv&   &\vv&\nl&\vv&   &   &   &   &   &   &   &   &   &   &   &&&&\np&
\\\hline
31&\vv&\vv&\vv&\vv&\vv&\nl&\vv&   &\vv&   &\nl&   &\nl&   &   &   &   &   &   &   &   &   &   &   &   &   &&&&&\np
\\\hline
\end{tabular}
\end{figure}

Here a single symbol from $\{1,2,H,V\}$ designates an outcome class, a pair of symbols indicates that the outcome class must be one of the two symbols, and $-x$ indicates that the outcome class is not $x$.

For all widths greater than $31$, the boards of height $1,2,3,4,5,7,9,$ and $11$ have outcome class $H$ and the boards of height $6$ and $13$ alternate between outcome class $H$ and $1H$. For height $8$, the outcome class of even boards of width greater than $54$ is $H$ but there is some irregularity in what is known before that:

\begin{figure}[h!]\tabcolsep=0.06cm
\tabcolsep=0.04cm
\tiny
\begin{tabular}{|c||c|c|c|c|c|c|c|c|c|c|c|c|c|c|c|c|c|c|c|c|c|c|c|c|c|c|c|}
\hline
  &32 &33 &34 &35 &36 &37 &38 &39 & 40& 41& 42& 43& 44& 45& 46& 47& 48& 49& 50& 51& 52& 53&54 \\
\hline\hline
8 &\rr&   &\nr&   &\rr&   &\nr&   &\rr&   &\rr&   &\nr&   &\rr&   &\rr&   &\rr&   &\rr&   &\nr
\\\hline
\end{tabular}
\end{figure}
Of course, the same results hold with $V$ replacing $H$ for all heights greater than 31 by reflection across the diagonal.
\newpage

\section{Errata for previous work}
\subsection{Errata for Lachmann et al.}
Lachmann et al. have a table of outcome classes similar to the table above. Their table contains a few errors.

In the table, they indicate which positions' outcome classes are calculated by brute force and which outcome classes follow from applying their rules. The outcome class of the $9\times 9$ board does not follow from their rules as indicated in their table and must be calculated by brute force (the outcome class itself is correct). 

On the other hand, the outcome classes for the $2\times 27$ and $6\times 12$ boards do not need to be calculated by brute force as indicated in their table but rather follow from more intricate application of their rules. That is, the vertical player can win the $2\times 27$ board going first by partitioning it into two $2\times 13$ boards and since $27=13+14$, the one-hand-tied principle indicates that the horizontal player can also win the $2\times 27$ board going first. The $6\times 12$ board is of the form $n\times 2n$ so its outcome class must be $H$ or $2$ but using the one-hand-tied principle with $12=4+8$ indicates the outcome class must be $H$ or $1$.

There is a more serious error in the transcription of the outcome class of the $4\times 13$ board from~\cite{BreukerUiterwijkvandenHerik:SEED}. The outcome class for that board is $2$ (this was later verified by Bullock, who did not mention the discrepancy) but Lachmann et al. record it as $V$. This means that the outcome class of the $6\times 13$ board must be checked by brute force (their entry is correct). It also means that their rules do not imply that the outcome class of the $13\times 17$ board is $1$ or $H$; it may also be $2$. Furthermore, this implies by using their rules that the outcome class of the $4\times 21$ board is $H$ (this was later verified by Bullock).
\subsection{Erratum for Bullock} 
In~\cite{Bullock:W} (although not in~\cite{Bullock:DSLCSS}), Bullock records the outcome class of $G_{6,29}$ as $1H$. This does not follow from any stated rule and this case is too large for a feasible verification with Obsequi.
\bibliography{references}{}
\bibliographystyle{amsalpha}
\end{document}